\documentclass[journal]{IEEEtran}

\usepackage[utf8]{inputenc}
\usepackage[T1]{fontenc}
\usepackage{amsmath,amssymb,amsthm}
\usepackage{mathtools}
\usepackage{graphicx}
\usepackage{booktabs}
\usepackage{cite}
\usepackage{tikz}
\usetikzlibrary{arrows,positioning,calc}
\usepackage{comment}

\newtheorem{theorem}{Theorem}
\newtheorem{lemma}{Lemma}

\newtheorem{corollary}{Corollary}

\newtheorem{remark}{Remark}

\newcommand{\R}{\mathbb{R}}
\newcommand{\Sym}{\mathbb{S}}
\newcommand{\Hinf}{\mathcal{H}_\infty}
\newcommand{\norm}[1]{\left\|#1\right\|}

\newcommand{\blkdiag}{\mathrm{blkdiag}}

\newcommand{\T}{^\top}
\newcommand{\He}[1]{\mathrm{He}\left(#1\right)}
\newcommand{\Ell}{\mathcal{L}}

\newcommand{\flab}[1]{\label{fig:#1}}
\newcommand{\fig}[1]{Fig.~\ref{fig:#1}}

\newcommand{\elab}[1]{\label{eq:#1}}
\newcommand{\eqn}[1]{\eqref{eq:#1}}
\newcommand{\tlab}[1]{\label{tab:#1}}
\newcommand{\tab}[1]{Table~\ref{tab:#1}}

\newcommand{\bmtx}{\begin{bmatrix}}
\newcommand{\emtx}{\end{bmatrix}}

\newcommand{\vspacelem}{\vspace{0.2in}}

\title{Robust $\Hinf$ Observer Design via Finsler's Lemma and IQCs}

\author{Raktim~Bhattacharya and Felix~Biert\"umpfel
\thanks{R.~Bhattacharya is with the Department of Aerospace Engineering, Texas A\&M University, College Station, TX 77843, USA {\tt\small raktim@tamu.edu}.}
\thanks{F.~Biert\"umpfel is with the Chair of Flight Mechanics and Control at TU Dresden, 01307 Dresden, Germany, and the Department of Electrical Engineering and Computer Science, University of Michigan, Ann Arbor, MI 48109, USA. {\tt\small felix.biertuempfel@tu-dresden.de}, {\tt\small felixb@umich.edu}.}}

\begin{document}

\maketitle

\begin{abstract}
We formulate a robust $\Hinf$ observer design with integral quadratic constraints (IQCs) and block-structured uncertainty as a semidefinite program. The standard block-diagonal Lyapunov approach ($P = \blkdiag(P_1,P_2)$ with $Y = P_2 L$) couples the Lyapunov matrix and the IQC multiplier through a trade-off that causes infeasibility for wide uncertainty ranges. A structured Finsler slack separates the Lyapunov matrix from the observer gain with exact recovery, restoring feasibility with both scalar and block-diagonal multipliers. We demonstrate the effectiveness of the approach on the state estimation problem of a mass-spring-damper state estimation problem with $20$--$50$\% parametric uncertainty.
\end{abstract}

\begin{IEEEkeywords}
Robust observer design, $\Hinf$ estimation, Integral Quadratic Constraints, Finsler's lemma, Linear matrix inequalities.
\end{IEEEkeywords}

\section{Introduction}

Robust state estimation requires guaranteed performance under modeling error, parametric uncertainty, and sensor degradation. Various approaches based on Kalman filtering dominate practical applications and are effective when stochastic assumptions on the perturbations align with the operating regime. However, certifying worst-case estimation performance under deterministic uncertainty models requires a different framework.

This problem can be addressed by combining $H_\infty$ observer design and the integral quadratic constraint (IQC) framework. The former provides worst-case guarantees by bounding the induced $\mathcal{L}_2$ gain from exogenous inputs to the estimation error~\cite{nagpal1991filtering}. The IQCs encode structured uncertainty through multiplier-defined quadratic inequalities~\cite{megretski1997system}. Combining both frameworks yields a synthesis formulation in which the optimal observer gain and multipliers are calculated via semidefinite programming while exploiting block-diagonal uncertainty structures.


Building on Nagpal and Khargonekar~\cite{nagpal1991filtering}, $\Hinf$ filtering was extended to robust settings with norm-bounded uncertainty by Fu, de Souza, and Xie~\cite{fu1992hinf}, where the uncertainty is unstructured and the standard change-of-variables $W = PL$ linearizes the synthesis. When the uncertainty is block-structured, IQC multipliers can exploit the structure to reduce conservatism. IQC-based synthesis has been advanced by Scherer~\cite{scherer2001lpv}, and Veenman and Scherer~\cite{veenman2014iqc}, Wang et al.~\cite{Wang2016}, and Biert\"umpfel et al.~\cite{Biertuempfel2025}, primarily for controller design with dynamic multipliers. However, the application to observer design with static multipliers and block-structured uncertainty has received less attention.

The bilinear matrix inequality (BMI) occurring in robust observer synthesis is commonly resolved via a change-of-variables that imposes restrictive block-diagonal structures on the Lyapunov matrix~\cite{scherer1997multiobjective,duan2013lmis}. While polytopic and parameter-dependent Lyapunov functions~\cite{deoliveira1999extended} present alternatives that avoid the standard $\He{PA} \prec 0$ requirement, they typically scale exponentially with the number of uncertain parameters ($2^N$ vertices for $N$ parameters). Extended LMI characterizations using slack variables were introduced by de Oliveira et al.~\cite{deoliveira1999extended} and Pipeleers et al.~\cite{pipeleers2009extended} to decouple the Lyapunov and state matrices and lift these structural constraints.


The present paper addresses these limitations by applying Finsler's lemma~\cite{finsler1937,deoliveira2001finsler,skelton1998unified} to the IQC-augmented observer synthesis problem. By decoupling quadratic forms from linear constraints, we achieve polynomial scaling in $N$. 
We review the general $\Hinf$ and integral quadratic constraint framework in Section~\ref{sec:Pre}.
The key contributions of our work are presented in Section~\ref{sec:lmi_derivation} and are twofold:
\begin{enumerate}
    \item We prove (Lemma~\ref{lem:obstruction}) that for the block-diagonal Lyapunov structure $P = \blkdiag(P_{11},P_{22})$, the IQC multiplier $\Lambda$ tightens the Lyapunov condition $\He{P_{11}A} \prec 0$, making wide uncertainty ranges and large $\|C_q\|$ incompatible with feasibility (Remark~\ref{rem:lambda_tradeoff}).
    \item We derive a Finsler-based LMI (Theorem~\ref{thm:main_theorem}) with a structured slack that decouples the Lyapunov matrix from the observer gain, circumventing this trade-off with exact gain recovery.
\end{enumerate}
We apply the derived approach to a mass-spring-damper system state estimation under wide uncertainty in Section~\ref{sec:applications}.

\subsection*{Notation}
By $\R^{n\times m}$ real $n \times m$ matrices are denoted. Real symmetric matrices are denoted by $\mathbb{S}^{n}$. If such a matrix is positive definite we use the notation 
 $\mathbb{S}^n_{++}$. We use the short notation 
$\He{A}$ for the hermitian part of a square matrix $A$ defined by $A + A\T$. The notation
$X \preceq Y$ is used if $Y{-}X$ is positive semidefinite and $X \prec Y$ if $Y{-}X$ is positive definite. By $\blkdiag(A,\ldots,Z)$, a block-diagonal matrix composed of the matrices $A,\ldots,Z$ is denoted. The upper LFT, i.e., the feedback interconnection of a system $M$ and $\Delta$ is written as $\mathcal{F}_u(M,\Delta)$. Symmetric blocks in a matrix inequality are abbreviated with $\star$. The space of functions $p:[0,\infty)\rightarrow \R^n$ satisfying $\norm{p}_2 < \infty$ with $\norm{p}_2:=[\int_0^{\infty}p(t)^\top p(t)dt ]^{0.5}$ is denoted by $\Ell_2^n[0,\infty)$. Given $v:[0,\infty)\rightarrow \R^n$, $(v)_T$ is the truncated function: $(v)_T(t) = v(t)$ for $t\le T$ and $(v)_T(t) = 0$ otherwise. The extended space $\Ell_{2e}^n$ defines the set of functions $v$ such that $(v)_T \in \Ell_2\, \forall\,T\ge 0$.

\section{Preliminaries}\label{sec:Pre}
\subsection{Problem Formulation}\label{ss:prob}
\begin{figure}[h!]\label{fig:lft}
\centering
\begin{tikzpicture}[auto, thick, node distance=2cm, >=latex', scale=0.75, every node/.style={scale=0.75}]
    \node [draw, rectangle, minimum width=2.5cm, minimum height=2cm] (M) {$\mathcal{P}$};
    \node [draw, rectangle, minimum width=1.5cm, minimum height=1cm, above=0.8cm of M] (Delta) {$\Delta$};
    \draw [->] ([yshift=0.5cm]M.east) -- ++ (0.5,0) node[right]{$q$} |- (Delta.east);
    \draw [->] (Delta.west) -- ++ (-1,0) |- ([yshift=0.5cm]M.west) node[left, xshift=-0.5cm]{$p$};
    \draw [<-] (M.west) -- ++ (-1,0) node[left]{$w$};
    \draw [->] (M.east) -- ++ (1,0) node[right]{$z$};
    \draw [->] ([yshift=-0.5cm]M.east) -- ++ (1,0) node[right]{$y$};
\end{tikzpicture}
\caption{Upper LFT interconnection.}
\flab{lft}
\end{figure}
Consider the uncertain plant formulated by the feedback interconnection of a known linear time invariant system $\mathcal{P}$ and uncertainty $\Delta$. The short-hand notation of this interconnection is $\mathcal{F}_u(\mathcal{P},\Delta)$~\cite{duan2013lmis} which is also referred to as upper LFT. The known system $M$ has the state-space realization:
\begin{subequations}
\begin{align}
    \dot{x}(t) &= Ax(t) + B_p p(t) + B_w w(t), \elab{plant_state}\\
    q(t) &= C_q x(t) + D_{qp} p(t) + D_{qw} w(t), \elab{plant_q}\\
    z(t) &= C_z x(t), \elab{plant_z}\\
    y(t) &= C_y x(t) + D_{yp} p(t) + D_{yw} w(t),\elab{plant_y}
\end{align}
\elab{plant_dynamics}
\end{subequations}
\noindent where $x(t)\in \R^{n_x}$ is the state vector, $w(t)\in \R^{n_w}$ and $p(t)\in \R^{n_p}$ are the input vectors, and $q(t)\in \R^{n_q}$, $y(t)\in \R^{n_y}$, $z(t)\in \R^{n_z}$ are the output vectors at a time $t\in\R_{\ge0}$, respectively. The uncertainty feedback is $p = \Delta(q)$, where the uncertainty $\Delta:\Ell_{2e}^{n_q} \rightarrow \Ell_{2e}^{n_p}$ is a bounded and causal operator. The uncertainty is further confined to a set $\mathcal{D}$ of admissible uncertainties, i.e., $\Delta\in \mathcal{D}$.
%
\vspacelem
\begin{remark}
For systems with multiple uncertain parameters (e.g., $m$, $c$, $k$ in a mass-spring-damper), the uncertainty has block-diagonal structure:
\begin{equation}
    \Delta = \blkdiag(\delta_1 I_{n_1}, \delta_2 I_{n_2}, \ldots, \delta_N I_{n_N}), \quad |\delta_i| \leq 1
    \elab{delta_structure}
\end{equation}
where each $\delta_i$ is a real (repeated) normalized scalar uncertainty and $n_i$ is the number of times it appears in the realization.
Throughout, we denote the corresponding uncertainty set by
\begin{equation}
    \mathcal{D} := \big\{\blkdiag(\delta_1 I_{n_1},\ldots,\delta_N I_{n_N}) \;\big|\; |\delta_i|\le 1\big\}.
    \elab{delta_set}
\end{equation}
\end{remark}
\vspacelem
We make the following standing assumptions on the introduced systems:
\begin{enumerate}
    \item[(A1)] The nominal pair $(A,\,C_y)$ is detectable.
    \item[(A2)] The $\mathcal{F}_u(G,\Delta)$ is well-posed: $\det(I - D_{qp}\Delta) \neq 0$ for all $\Delta \in \mathcal{D}$.
    \item[(A3)] $D_{zp} = 0$ and $D_{zw} = 0$ (performance depends only on the state); $D_{yp}$ may be nonzero.
    \item[(A4)] The uncertain plant is robustly stable: $A + B_p \Delta (I - D_{qp}\Delta)^{-1} C_q$ is Hurwitz for all $\Delta \in \mathcal{D}$.
\end{enumerate}

For the known plant dynamics, a Luenberger observer can be defined:
\begin{align}
\dot{\hat{x}} &= A\hat{x} + L(y - C_y \hat{x}), \notag\\
 &= (A - LC_y)\hat{x} + L C_y x + L D_{yp} p + L D_{yw} w.
 \elab{observer_robust}
\end{align}
Defining the estimation error $e = x - \hat{x}$, the error dynamics are:
\begin{equation}
    \dot{e} = (A - LC_y)e + (B_p - L D_{yp}) p + (B_w - L D_{yw}) w.
    \elab{error_dynamics_robust}
\end{equation}
The performance error is $\tilde{z} = z - \hat{z} = C_z e$.
The uncertainty channel $q = C_q x + D_{qp}p + D_{qw}w$ depends on $x$, not $e$. Thus, the augmented state must contain both. Define $\xi := [x\T, e\T]\T$, $\eta := [p\T, w\T]\T$, and $\zeta:=[q\T, \tilde{z}\T]\T$. Using \eqn{plant_dynamics} and \eqn{error_dynamics_robust}, the augmented system is:
\begin{equation}
    \dot{\xi} = \underbrace{\begin{bmatrix} A & 0 \\ 0 & A - LC_y \end{bmatrix}}_{A_{\text{aug}}} \xi + \underbrace{\begin{bmatrix} B_p & B_w \\ B_p - LD_{yp} & B_w - LD_{yw} \end{bmatrix}}_{B_{\text{aug}}} \eta.
    \elab{augmented_dynamics}
\end{equation}

The outputs $\zeta = [q\T,\,\tilde{z}\T]\T$ are
\begin{equation}
    \zeta = \underbrace{\begin{bmatrix} C_q & 0 \\ 0 & C_z \end{bmatrix}}_{C_{\text{aug}}} \xi + \underbrace{\begin{bmatrix} D_{qp} & D_{qw} \\ 0 & 0 \end{bmatrix}}_{D_{\text{aug}}}\eta.
    \elab{augmented_outputs}
\end{equation}
The matrix $A_{\text{aug}}$ is block-diagonal. Thus, $x$ does not depend on $e$, and coupling enters only through $\eta$.

Our goal is to synthesize an observer gain $L$ and the smallest bound $\gamma>0$ such that the induced $\Ell_2$ gain from the exogenous input $w$ (disturbances and sensor noise) to the performance output $\tilde z = C_z e$ is bounded by $\gamma$ for every admissible uncertainty realization $\Delta$ satisfying \eqn{delta_structure}. Denoting by $T_{w\to \tilde z}(\Delta)$ the closed-loop transfer operator of the augmented interconnection (with $p=\Delta(q)$), the robust $\Hinf$ performance requirement is
\begin{equation}
    \sup_{\Delta\in\mathcal{D}}\;\norm{T_{w\to \tilde z}(\Delta)}_\infty < \gamma.
\end{equation}
The following subsections review key ingredients for our derivations of an efficient and less conservative robust observer design using integral quadratic constraints.

\subsection{Classical $\Hinf$ Observer Design}\label{sec:nominal_observer}

Consider the nominal linear time-invariant plant $\mathcal{P}$ with $\Delta = 0$ and thus $p=0$ defined by
\begin{subequations}
\begin{align}
 \dot{x} &= Ax + B_w w \\
 z &= C_z x\\
 y &= C_y x + D_{yw} w
\end{align}    
\end{subequations}
The Luenberger observer
\begin{equation}
    \dot{\hat{x}} = A\hat{x} + L(y - C_y\hat{x})
\end{equation}
produces the error dynamics
\begin{subequations}
\begin{align}
    \dot{e} &= (A - LC_y)e + (B_w - LD_{yw})w\\ \tilde{z} &= C_z e,
\end{align}
    \elab{error_dynamics_nom}
\end{subequations}
and the corresponding $\Hinf$ synthesis problem results in the following optimization:
\begin{equation}\label{eq:OptiNom}
\begin{split}
\min_{L} \quad & \gamma \\
\mathrm{s.t.} \quad & \|C_z(sI - A + LC_y)^{-1}(B_w - LD_{yw})\|_\infty < \gamma.
\end{split}
\end{equation}
We invoke the well-known bounded real lemma (BRL) to convert~\eqref{eq:OptiNom} to a tractable linear matrix inequality (LMI). This is formalized in Lemma~\ref{lem:nomBRL} \cite{nagpal1991filtering,fu1992hinf}.
\begin{lemma}[$\Hinf$ Observer Design]\label{lem:nomBRL}
    There exists an observer gain $L$ such that the error dynamics~\eqref{eq:error_dynamics_nom} are asymptotically stable and satisfy the performance bound in~\eqref{eq:OptiNom} if and only if there exists $P \in \Sym_{++}^{n_x}$ and a matrix $W\in \R^{n_x \times n_y}$ such that
\begin{equation}
    \begin{bmatrix}
        A\T P + PA - C_y\T W\T - WC_y \!\!&\!\! PB_w - WD_{yw} \!\!&\!\! C_z\T \\
        \star &\!\! -\gamma I & 0 \\
        \star&\!\! \star \!\!&\!\! -\gamma I
    \end{bmatrix} \!\preceq\! 0
    \elab{observer_lmi}
\end{equation}
The observer gain $L$ is recovered from feasible values of $P$ and $W$ as $L = P^{-1}W$.
\end{lemma}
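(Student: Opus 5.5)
The proof applies the bounded real lemma to the closed-loop error system \eqref{eq:error_dynamics_nom} and then linearizes by a change of variables. For a fixed gain $L$, write $A_L := A - LC_y$ and $B_L := B_w - LD_{yw}$. The error system $(A_L, B_L, C_z, 0)$ is asymptotically stable with $\|C_z(sI-A_L)^{-1}B_L\|_\infty < \gamma$ if and only if there exists $P \in \Sym_{++}^{n_x}$ with
\begin{equation*}
\begin{bmatrix} A_L\T P + P A_L & P B_L & C_z\T \\ \star & -\gamma I & 0 \\ \star & \star & -\gamma I \end{bmatrix} \prec 0 .
\end{equation*}
This is the symmetric ("$\gamma$ on both blocks") form of the KYP/bounded real lemma. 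One gets it from the usual form $\left[\begin{smallmatrix} \He{PA_L} + \gamma^{-1}C_z\T C_z & PB_L \\ \star & -\gamma I\end{smallmatrix}\right] \prec 0$ by taking a Schur complement of the $-\gamma I$ block associated with $C_z$, after scaling $P$ by $\gamma$.

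Next, expand $PA_L = PA - PLC_y$ and $PB_L = PB_w - PLD_{yw}$ and substitute $W := PL$. The $(1,1)$ block becomes $A\T P + PA - C_y\T W\T - WC_y$ and the $(1,2)$ block becomes $PB_w - WD_{yw}$, which gives exactly \eqref{eq:observer_lmi}.

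\emph{Necessity.} Given a stabilizing $L$ that meets the bound, the bounded real lemma supplies $P$, and we set $W = PL$.

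\emph{Sufficiency.} Given feasible $(P,W)$, $P \succ 0$ is invertible, so $L = P^{-1}W$ is well defined and $PL = W$. The inequality is then the bounded real lemma certificate for this $L$. The $(1,1)$ block gives $\He{PA_L} \prec 0$, so $A_L$ is Hurwitz by Lyapunov's theorem, and the remaining blocks give the $\Hinf$ bound via the dissipation inequality $\frac{d}{dt}e\T P e + \gamma^{-1}\|\tilde z\|^2 - \gamma\|w\|^2 < 0$, integrated from zero initial state.

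The main obstacle is reconciling the non-strict $\preceq 0$ in \eqref{eq:observer_lmi} with the strict bound and asymptotic stability. A semidefinite inequality only yields $\|\cdot\|_\infty \le \gamma$ and marginal stability. I would handle this by reading the lemma in its intended strict sense: with $\preceq$, a feasible point for every $\gamma' > \gamma$ gives a strictly feasible point at any slightly larger $\gamma$. Alternatively, one can perturb $P$ using detectability (A1) to obtain strictness. In the write-up I would state explicitly that the equivalence holds with $\prec 0$, and that with $\preceq 0$ it holds in the infimal sense over $\gamma$. Everything else is a routine congruence/Schur-complement calculation.
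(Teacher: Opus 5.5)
Your proposal is correct and follows the same route the paper points to in its omitted argument. That route is the bounded real lemma for the error system with the $\gamma$-scaled storage $e^\top P e$, plus the linearizing substitution $W = PL$, with the gain recovered as $L = P^{-1}W$.

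Your caveat about $\preceq 0$ is right, and you should keep the strict reading. For example, the scalar data $A=-1$, $C_y=0$, $D_{yw}=0$, $B_w=C_z=1$ give a detectable pair and satisfy \eqref{eq:observer_lmi} with $\preceq$ at $\gamma=1$ and $P=1$. Yet every $L$ gives $\|T\|_\infty = 1$. So your alternative of perturbing $P$ using detectability can only yield strict feasibility at $\gamma' > \gamma$ (e.g.\ by convex combination with a strictly feasible point), not at $\gamma$ itself.
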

\begin{proof}
    The proof follows standard dissipativity arguments applied to the (scaled) Lyapunov function $V(x) = x\T P x$. It is omitted for brevity and can be found in, e.g., \cite{fu1992hinf}.
\end{proof}

\begin{remark}
In nominal $\Hinf$ observer design (Lemma~\ref{lem:nomBRL}), the Bounded Real Lemma (BRL) involves only the error dynamics $(A - LC_y, B_w - LD_{yw}, C_z)$. The robust design requires the augmented state $\xi = [x;\,e]$ because the uncertainty channel, $q = C_q x$ depends on the plant state $x$, not the error $e$. Carrying $x$ in $\xi$ introduces $\He{P_{11}A}$ into the Lyapunov condition --- a term the observer gain $L$ cannot influence, since $L$ enters only the $e$-block (Lemma~\ref{lem:obstruction}).
\end{remark}
\subsection{Integral Quadratic Constraints (IQCs)}\label{sec:iqc}
The IQC framework encodes structured uncertainty through quadratic inequalities on interconnection signals~\cite{megretski1997system,pfifer2015iqc}. The IQC bounds the input-output behavior of the uncertainty and effectively removes the explicit uncertainty model from $\mathcal{F}_u(M,\Delta)$. We use static \emph{hard} IQCs to upper bound the input-output behavior of an uncertainty $\Delta\in\mathcal{D}$. The uncertainty $\Delta$ satisfies the hard IQC defined by a matrix $M\in \Sym^{n_q+n_p}$ if, for every $T > 0$,
\begin{equation}
    \int_0^T \begin{bmatrix} q(t) \\ p(t) \end{bmatrix}\T M \begin{bmatrix} q(t) \\ p(t) \end{bmatrix} \, dt \ge 0.
    \elab{iqc_time}
\end{equation}
\vspace{5pt}

A suitable diagonal multiplier $M$ for block-diagonal uncertainty $\Delta = \blkdiag(\Delta_1,\ldots,\Delta_N)$ with $\|\Delta_i\| \le 1$ is defined by
\begin{equation}
    M(\Lambda) = \begin{bmatrix} \Lambda & 0 \\ 0 & -\Lambda \end{bmatrix}, \quad \Lambda = \blkdiag(\Lambda_1,\ldots,\Lambda_N),\ \Lambda_i \succ 0.
    \elab{pi_lambda}
\end{equation}
This parameterization assigns independent scalings to each uncertainty block, reducing conservatism compared to a single $\Lambda = \lambda I$. When the uncertainty blocks are non-repeated ($n_i = 1$), $\Lambda_i$ is a scalar. When $n_i > 1$ (repeated scalar uncertainty $\Delta_i = \delta_i I_{n_i}$), $\Lambda_i \in \mathbb{S}^{n_i}_{++}$ provides additional degrees of freedom.

Note that the hard IQC~\eqn{iqc_time} requires $\Lambda \succeq \Delta\T \Lambda \Delta$ for all $\Delta \in \mathcal{D}$. The block-diagonal structure $\Lambda = \blkdiag(\Lambda_1,\ldots,\Lambda_N)$ is the richest multiplier satisfying this condition for general norm-bounded uncertainties, since $\Delta_i\T \Lambda_i \Delta_i = \delta_i^2 \Lambda_i \preceq \Lambda_i$ for $|\delta_i| \leq 1$. Richer multiplier classes (e.g., $DG$-like multipliers for strictly real uncertainties \cite{scherer2001lpv}) use a different multiplier structure and can reduce conservatism further. Moreover, the use of block-diagonal multipliers provides structural advantages for the derivation of the robust $\Hinf$ observer in Section~\ref{sec:lmi_derivation}.

\section{LMI Derivation of Robust $\Hinf$ Observer}\label{sec:lmi_derivation}
The $\Hinf$-observer problem can be posed in the form of a dissipation inequality (see Section~\ref{sec:nominal_observer}). This fact allows for an extension of the BRL using hard IQCs (see, e.g.,~\cite{Seiler2015}), which we sketch below. Introducing a storage function $V(\xi)=\xi\T P\xi$ with $P \succ 0$, the pointwise dissipation inequality
\begin{equation}
    \dot{V}(\xi) + \tilde{z}\T \tilde{z} - \gamma^2 w\T w + \begin{bmatrix} q \\ p \end{bmatrix}\T M \begin{bmatrix} q \\ p \end{bmatrix} \le 0
    \elab{iqc_dissipation}
\end{equation}
integrates over $[0,T]$ to yield the $\Hinf$ certificate $\int_0^T \|\tilde{z}\|^2\,dt \le \gamma^2 \int_0^T \|w\|^2\,dt$ for zero initial conditions, using $V(\xi(T)) \ge 0$ and the hard IQC~\eqn{iqc_time}.
To evaluate this inequality in state-space, we use the augmented state $\xi = [x\T,\,e\T]\T$ with $e = x - \hat{x}$, giving $A_{\text{aug}} = \blkdiag(A,\,A-LC_y)$ throughout the section.

\subsection{Structural Limitations of the Classical Formulation}
Substituting $\dot{V}(\xi) = \xi\T(\He{PA_{\text{aug}}})\xi + 2\xi\T P B_{\text{aug}} \eta$ into \eqn{iqc_dissipation} and collecting all terms as a quadratic form in $[\xi\T,\,\eta\T]\T$ yields a matrix inequality that is \emph{bilinear} in $(P,\,L)$. In other words, the product $PA_{\text{aug}}(L)$ couples the Lyapunov matrix with the observer gain, since $L$ enters $A_{\text{aug}}$ and $B_{\text{aug}}$ through the error dynamics. With $P = \blkdiag(P_{11},P_{22})$, the substitution $Y = P_{22}L$ linearizes the entire inequality, but imposes the structural requirement $\He{P_{11}A} \prec 0$ (Section~\ref{sec:blkdiag}).

The requirement $\He{P_{11}A} \prec 0$ is well known for block-diagonal Lyapunov approaches in robust synthesis~\cite{scherer1997multiobjective,deoliveira1999extended}. The following lemma characterizes the interaction between $\Lambda$ and this condition.

\begin{lemma}[Multiplier--Lyapunov constraint for blkdiag $P$]\label{lem:obstruction}
Let $P = \blkdiag(P_{11},P_{22})$ with $P_{11},P_{22} \succ 0$, and let $\Lambda \succeq 0$. If the IQC-extended BRL condition
\begin{equation}
    \begin{bmatrix}
        \He{P A_{\text{aug}}} + Q_{22} & P B_{\text{aug}} + Q_{23} \\
        \star & Q_{33}
    \end{bmatrix} \prec 0
    \elab{brl_iqc}
\end{equation}
holds, then $\He{P_{11}A} + C_q\T\Lambda C_q \prec 0$, and consequently $\He{P_{11}A} \prec 0$.
\end{lemma}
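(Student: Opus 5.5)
The plan is to read off the $(1,1)$ sub-block of \eqn{brl_iqc} that corresponds to the plant state $x$, using the fact that a principal submatrix of a negative definite matrix is negative definite. First the terms $Q_{22},Q_{23},Q_{33}$ must be made explicit. I would get them from \eqn{iqc_dissipation} with $M = M(\Lambda)$ from \eqn{pi_lambda}. I would substitute $q = C_q x + D_{qp}p + D_{qw}w$ and $\tilde z = C_z e$, and collect terms as a quadratic form in $[\xi\T,\eta\T]\T$ with $\xi=[x\T,e\T]\T$. The $\xi\xi$ contributions are
\[
Q_{22} = \begin{bmatrix} C_q\T \Lambda C_q & 0 \\ 0 & C_z\T C_z\end{bmatrix}.
\]
These come from $q\T\Lambda q$ and $\tilde z\T\tilde z$; the term $-p\T\Lambda p$ and the $-\gamma^2 w\T w$ term land only in $Q_{33}$. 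If the strict version with $\gamma$ scaled as in Lemma~\ref{lem:nomBRL} is used, the $C_z\T C_z$ term may instead appear after a Schur complement. Either way it is positive semidefinite and sits only in the $e$-block, so it does not affect the argument.

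Next I compute the $\xi\xi$ block $\He{PA_{\text{aug}}}$ under the structural assumption $P=\blkdiag(P_{11},P_{22})$ and $A_{\text{aug}} = \blkdiag(A, A-LC_y)$. The product is block diagonal, $\He{PA_{\text{aug}}} = \blkdiag(\He{P_{11}A}, \He{P_{22}(A-LC_y)})$. In particular, the observer gain $L$ does not enter the $x$-block.

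With both blocks in hand, I apply the principal-submatrix step. Take the test vectors $[x\T, 0, 0]\T$ with $x\neq 0$ (zero $e$ and zero $\eta$) in \eqn{brl_iqc}. Strict negativity gives
\[
x\T\big(\He{P_{11}A} + C_q\T\Lambda C_q\big)x < 0,
\]
which is the first claim. Since $\Lambda\succeq 0$, we have $C_q\T\Lambda C_q\succeq 0$, so $\He{P_{11}A} \preceq \He{P_{11}A}+C_q\T\Lambda C_q \prec 0$. This is the second claim.

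The main obstacle is bookkeeping rather than analysis. One must confirm that $Q_{22}$ as defined in the paper's expansion has exactly $C_q\T\Lambda C_q$ in its $(x,x)$ position. One must also confirm that no other $x$-dependent term enters that position: the $D_{qp}$ and $D_{qw}$ cross terms go into $Q_{23}$, and the $p$ and $w$ quadratics go into $Q_{33}$. If the paper instead writes the inequality after a Schur complement on $\tilde z$, the $(x,x)$ block is unchanged, because $C_{\text{aug}}$ is block diagonal with $C_z$ acting only on $e$. I would state this explicitly so the conclusion holds under either arrangement.
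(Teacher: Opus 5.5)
Your proposal is correct and follows essentially the same route as the paper: you block-diagonalize $\He{PA_{\text{aug}}}$, identify $C_q\T\Lambda C_q\succeq 0$ as the plant-state entry of $Q_{22}$, and restrict the LMI to the $x$-block. The only difference is cosmetic. The paper passes through a Schur complement, writing $\He{P_{11}A}+C_q\T\Lambda C_q+S\prec 0$ with $S\succeq 0$, whereas your direct principal-submatrix (test-vector) restriction gives the first claim immediately and is, if anything, cleaner.
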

\begin{proof}
With $P = \blkdiag(P_{11},P_{22})$ and $A_{\text{aug}} = \blkdiag(A,\,A-LC_y)$ we can write
\begin{equation*}
    \He{P A_{\text{aug}}} = \blkdiag\!\big(\He{P_{11}A},\;\He{P_{22}(A-LC_y)}\big).
\end{equation*}
The $(1,1)$ block of $Q_{22} = C_{\text{aug}}\T \Pi_\Lambda C_{\text{aug}}$ is $C_q\T\Lambda C_q \succeq 0$ (since $\Lambda \succeq 0$ and $C_{\text{aug}} = [C_q,\,0;\,0,\,C_z]$). The leading $n\times n$ block of \eqn{brl_iqc} satisfies
\begin{equation*}
    \He{P_{11}A} + C_q\T \Lambda C_q + S \prec 0,
\end{equation*}
where $S \succeq 0$ collects the Schur complement contribution from the remaining blocks. Since, $S \succeq 0$ and $C_q\T\Lambda C_q \succeq 0$ it strictly follows that $\He{P_{11}A}\prec 0$.
\end{proof}

The condition $\He{P_{11}A} \prec 0$ is classical~\cite{scherer1997multiobjective,deoliveira1999extended}. Lemma~\ref{lem:obstruction} adds that the IQC multiplier acts against feasibility in the plant-state block. In other words, increasing $\Lambda$ to tighten the uncertainty characterization simultaneously tightens the Lyapunov constraint on $A$. 
This artificially restricts how much uncertainty the observer can handle by introducing significant conservatism. We will quantify this trade-off in Section~\ref{sec:applications} (Remark~\ref{rem:lambda_tradeoff}).

\subsection{Resolution via Finsler's Lemma with Structured Slack}
To overcome the structural obstruction in the LMI~\eqref{eq:brl_iqc} identified by Lemma~\ref{lem:obstruction}, the Lyapunov matrix $P$ must be decoupled from the system dynamics.
We resolve the BMI by applying Finsler's lemma with a \emph{structured} slack matrix that provides a single substitution variable, yielding exact gain recovery without a posteriori verification.
\begin{lemma}[Finsler~\cite{finsler1937}]
\label{lem:finsler}
Let $Q = Q\T \in \R^{m \times m}$ and $\mathcal{B} \in \R^{k \times m}$. The following statements are equivalent:
\begin{enumerate}
    \item $v\T Q v < 0$ for all $v \neq 0$ satisfying $\mathcal{B} v = 0$.
    \item There exists $G \in \R^{m \times k}$ such that $Q + G\mathcal{B} + \mathcal{B}\T G\T \prec 0$.
\end{enumerate}
\end{lemma}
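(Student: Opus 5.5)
The plan is to prove the two implications separately, reducing everything to the null space $\mathcal{N} := \ker\mathcal{B}$. Fix a matrix $N\in\R^{m\times r}$ whose columns form a basis of $\mathcal{N}$. If $\mathcal{N}=\{0\}$, statement 1 holds vacuously. In that case $\mathcal{B}$ has full column rank, and we can take $G = -\mu\,\mathcal{B}\T$ with $\mu$ large. This gives $Q - 2\mu\,\mathcal{B}\T\mathcal{B}\prec 0$, because $\mathcal{B}\T\mathcal{B}\succ 0$. So this degenerate case is immediate, and from now on we assume $r\ge 1$.

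\emph{(2)$\Rightarrow$(1).} This direction is a direct computation. Take any $v\neq 0$ with $\mathcal{B}v=0$. Then
\begin{equation*}
0 > v\T\big(Q + G\mathcal{B} + \mathcal{B}\T G\T\big)v = v\T Q v + 2\,v\T G(\mathcal{B}v) = v\T Q v,
\end{equation*}
which is exactly statement 1.

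\emph{(1)$\Rightarrow$(2).} Statement 1 says $N\T Q N\prec 0$. The plan is to show that the specific choice $G=-\mu\,\mathcal{B}\T$ works for some $\mu>0$. With this choice the matrix in statement 2 becomes $Q-2\mu\,\mathcal{B}\T\mathcal{B}$, so it suffices to prove that $v\T Q v < 2\mu\,\|\mathcal{B}v\|^2$ for every unit vector $v$.

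This is the main step, and I would argue it by compactness. Define $f(v) := v\T Q v / \|\mathcal{B}v\|^2$ on the set $K_\epsilon := \{\|v\|=1,\ \|\mathcal{B}v\|\ge \epsilon\}$. Because $K_\epsilon$ is compact and $f$ is continuous there, $f$ is bounded on $K_\epsilon$ by some constant $c_\epsilon$. It remains to handle unit vectors with $\|\mathcal{B}v\|<\epsilon$.
\begin{itemize}
\item Such a vector is close to $\mathcal{N}$: if $\sigma$ is the smallest nonzero singular value of $\mathcal{B}$, then $\mathrm{dist}(v,\mathcal{N})\le \epsilon/\sigma$.
\item By continuity of $v\mapsto v\T Q v$, together with the strict negativity $v\T Q v\le -\alpha<0$ on the unit sphere of $\mathcal{N}$, choosing $\epsilon$ small gives $v\T Q v<0$ for these vectors.
\item On that region the desired inequality then holds for any $\mu\ge 0$.
\end{itemize}
Finally, choosing $\mu > \max(c_\epsilon,0)/2$ covers the remaining unit vectors in $K_\epsilon$, and the claim follows.

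A cleaner alternative, which I would use if the compactness bookkeeping becomes messy, works in coordinates. Write $v=Nx+N_\perp y$, where the columns of $N_\perp$ span the range of $\mathcal{B}\T$ (so that $\mathcal{B}N_\perp$ has full column rank). In these coordinates $Q-2\mu\,\mathcal{B}\T\mathcal{B}$ becomes the block matrix
\begin{equation*}
\begin{bmatrix} N\T Q N & N\T Q N_\perp \\ \star & N_\perp\T Q N_\perp - 2\mu\,(\mathcal{B}N_\perp)\T(\mathcal{B}N_\perp)\end{bmatrix}.
\end{equation*}
A Schur complement on the $(1,1)$ block $N\T Q N\prec 0$ reduces negativity to
\begin{equation*}
N_\perp\T Q N_\perp - N_\perp\T Q N\,(N\T Q N)^{-1}N\T Q N_\perp - 2\mu\,(\mathcal{B}N_\perp)\T(\mathcal{B}N_\perp)\prec 0.
\end{equation*}
This holds for $\mu$ sufficiently large, since $(\mathcal{B}N_\perp)\T(\mathcal{B}N_\perp)\succ 0$.
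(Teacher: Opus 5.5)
Your proof is correct. The paper gives no argument of its own for this lemma and simply defers to \cite{finsler1937}, so your proposal supplies a self-contained proof where the paper only has a citation.

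Your direction (2)$\Rightarrow$(1) is the one-line computation you give. It is also the only direction that Theorem~\ref{thm:main_theorem} actually uses: there $G$ is restricted to the structured form \eqn{structured_slack}, and only sufficiency is claimed.

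For (1)$\Rightarrow$(2) you prove something slightly stronger than the statement, namely that the specific slack $G=-\mu\mathcal{B}\T$ already suffices. This routes the lemma through its intermediate form $Q-2\mu\mathcal{B}\T\mathcal{B}\prec 0$, as in \cite{deoliveira2001finsler}.

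Of your two arguments, the congruence with $[N,\ N_\perp]$ followed by a Schur complement is the standard one. It also yields an explicit threshold. Write $S:=N_\perp\T QN_\perp-N_\perp\T QN(N\T QN)^{-1}N\T QN_\perp$ and $R:=(\mathcal{B}N_\perp)\T\mathcal{B}N_\perp\succ 0$. Then any $\mu$ with $2\mu\,\lambda_{\min}(R)>\lambda_{\max}(S)$ works. The compactness route is valid but only gives the existence of $\mu$.

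One small bookkeeping point: if $\mathcal{B}=0$, there is no nonzero singular value $\sigma$ and $N_\perp$ is empty. In that case statement~1 reads $Q\prec 0$ and $G=0$ works, so the case is trivially covered.
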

\begin{proof}
    The proof is provided in~\cite{finsler1937}.
\end{proof}
\noindent To leverage Finsler's lemma, we first introduce selection matrices for $q$, $\tilde{z}$, $p$, $w$:
\begin{equation}
    M_q := \begin{bmatrix} I_{n_q} & 0 \end{bmatrix},\quad
    M_{\tilde{z}} := \begin{bmatrix} 0 & I_{n_z} \end{bmatrix} \in \R^{\cdot\times(n_q+n_z)},
    \elab{selection_matrices}
\end{equation}
\begin{equation}
    E_p := \begin{bmatrix} I_{n_p} & 0 \end{bmatrix},\quad
    E_w := \begin{bmatrix} 0 & I_{n_w} \end{bmatrix} \in \R^{\cdot\times n_\eta}
    \elab{eta_selection}
\end{equation}
We then set $\Pi_\Lambda := M_{\tilde{z}}\T M_{\tilde{z}} + M_q\T \Lambda M_q$ and define
\begin{align}
    Q_{22} &:= C_{\text{aug}}\T \Pi_\Lambda C_{\text{aug}},\quad
    Q_{23} := C_{\text{aug}}\T \Pi_\Lambda D_{\text{aug}}, \notag\\
    Q_{33} &:= D_{\text{aug}}\T \Pi_\Lambda D_{\text{aug}} - \gamma^2 E_w\T E_w - E_p\T \Lambda E_p. \elab{Q33}
\end{align}

Next, we introduce a structured slack variable to facilitate a single substitution in the robust observer synthesis LMI, which avoids the typical relaxation gap. By setting $\nu := [\dot{\xi}\T,\,\xi\T,\,\eta\T]\T$ and $\mathcal{B} := [I,\,-A_{\text{aug}},\,-B_{\text{aug}}]$, the dissipation inequality~\eqn{iqc_dissipation} becomes $\nu\T\mathcal{Q}\,\nu \le 0$ on $\ker\mathcal{B}$. By Lemma~\ref{lem:finsler}, this holds if and only if $\mathcal{Q} + G\mathcal{B} + \mathcal{B}\T G\T \prec 0$ for some $G \in \R^{(2n_\xi+n_\eta)\times n_\xi}$.

A general (unstructured) $G = [G_1;\,G_2;\,G_3]$ requires three independent substitutions $\mathcal{Y}_i = G_{i2}L$. Since the three substitutions need not imply the same $L$, this introduces a relaxation gap. Instead, we impose the structure
\begin{equation}
    G = \begin{bmatrix} Z \\ aZ \\ Z_3 \end{bmatrix},
    \elab{structured_slack}
\end{equation}
where $Z \in \R^{n_\xi \times n_\xi}$ is a free (not necessarily symmetric) matrix, $a > 0$ is a fixed scalar parameter, and $Z_3 \in \R^{n_\eta \times n_\xi}$ has columns $n{+}1{:}2n$ constrained as $Z_{3e} = \beta Z_{e,\text{bot}}$ for a fixed $\beta > 0$. Here $Z_{e,\text{bot}} \in \R^{n\times n}$ denotes the submatrix of $Z$ at rows $n{+}1{:}2n$, columns $n{+}1{:}2n$.

This structure yields a \emph{single} substitution
\begin{equation}
    W := Z_{e,\text{bot}}\, L \in \R^{n\times n_y},
    \elab{single_substitution}
\end{equation}
which linearizes all appearances of $L$ in $A_{\text{aug}}$ and $B_{\text{aug}}$ simultaneously. Denoting by $Z_e$ columns $n{+}1{:}2n$ of $Z$, the terms in $A_{\text{aug}}$ become $Z_e(A - LC_y) = Z_eA - WC_y$, while in $B_{\text{aug}}$ they become $Z_e(-LD_{yp},\,-LD_{yw}) = -W(D_{yp},\,D_{yw})$. The constrained $Z_3$ block uses $Z_{3e}L = \beta Z_{e,\text{bot}}L = \beta W$, preserving the single-substitution property.

\begin{theorem}[IQC-based $\Hinf$ Observer via Finsler's Lemma]
\label{thm:main_theorem}
Consider the uncertain plant \eqn{plant_dynamics} under assumptions (A1)--(A4), with augmented dynamics \eqn{augmented_dynamics}--\eqn{augmented_outputs}, block-diagonal uncertainty $\Delta\in\mathcal{D}$, and hard IQC multiplier $\Pi(\Lambda)$, $\Lambda_i \succ 0$. Suppose there exist $P = P\T \succ 0 \in \R^{n_\xi\times n_\xi}$, a structured slack $G$ as in \eqn{structured_slack} with $Z \in \R^{n_\xi\times n_\xi}$, $Z_3 \in \R^{n_\eta \times n_\xi}$, substitution $W \in \R^{n\times n_y}$ as in \eqn{single_substitution}, and $\gamma > 0$, such that
\begin{equation}
    \He{Z_{e,\text{bot}}} \prec -\varepsilon_Z I,\quad \varepsilon_Z > 0,
    \elab{Z_constraint}
\end{equation}
and the $(2n_\xi+n_\eta)\times(2n_\xi+n_\eta)$ LMI
\begin{equation}
\elab{iqc_observer_lmi}
    \footnotesize
    \begin{bmatrix}
        \He{Z} & P - Z A_{\text{aug}}^W + aZ\T & -Z B_{\text{aug}}^W + Z_3\T \\
        \star & Q_{22} - a\He{Z A_{\text{aug}}^W} & Q_{23} - aZ B_{\text{aug}}^W - (Z_3 A_{\text{aug}}^W)\T \\
        \star & \star & Q_{33} - \He{Z_3 B_{\text{aug}}^W}
    \end{bmatrix} \prec 0
\end{equation}
holds, where $A_{\text{aug}}^W$ and $B_{\text{aug}}^W$ denote $A_{\text{aug}}$ and $B_{\text{aug}}$ with $Z_eL$ replaced by $W$ (and $Z_{3e}L$ by $\beta W$) as described above. Then:
\begin{enumerate}
    \item[\emph{(a)}] Condition~\eqn{Z_constraint} implies $Z_{e,\text{bot}}$ is nonsingular, so the observer gain $L = Z_{e,\text{bot}}^{-1}W$ is well-defined.
    \item[\emph{(b)}] For any $\Delta$ satisfying the hard IQC~\eqn{iqc_time} with multiplier $\Pi(\Lambda)$, the Luenberger observer $\dot{\hat{x}} = A\hat{x} + L(y - C_y\hat{x})$ achieves $\norm{T_{w\to\tilde{z}}(\Delta)}_\infty < \gamma$.
\end{enumerate}
\end{theorem}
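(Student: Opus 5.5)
Part (a) is immediate and is handled first. If $Z_{e,\text{bot}}v=0$ for some $v\neq 0$, then $v\T\He{Z_{e,\text{bot}}}v = 2v\T Z_{e,\text{bot}}v = 0$. This contradicts \eqn{Z_constraint}, which forces $v\T\He{Z_{e,\text{bot}}}v \le -\varepsilon_Z\|v\|^2<0$. Hence $Z_{e,\text{bot}}$ is nonsingular, and $L := Z_{e,\text{bot}}^{-1}W$ is well defined. With this $L$, the substitution \eqn{single_substitution} holds exactly: $W = Z_{e,\text{bot}}L$.

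For part (b), the first step is to undo the linearization. I need to check that with $L$ as above, every $W$-dependent entry of \eqn{iqc_observer_lmi} equals the corresponding product with the true $A_{\text{aug}}(L)$ and $B_{\text{aug}}(L)$. Concretely:
\begin{itemize}
\item $ZA_{\text{aug}}^W = ZA_{\text{aug}}$ and $ZB_{\text{aug}}^W = ZB_{\text{aug}}$, because $L$ only enters the $e$-columns through $Z_e(A-LC_y)$ and $-Z_eL(D_{yp},D_{yw})$.
\item $Z_3A_{\text{aug}}^W = Z_3A_{\text{aug}}$ and $Z_3B_{\text{aug}}^W = Z_3B_{\text{aug}}$, because $Z_{3e}L = \beta Z_{e,\text{bot}}L = \beta W$.
\end{itemize}
This bookkeeping is the step I expect to be the main obstacle. $Z_e$ has $2n$ rows while $W$ is defined from the $n\times n$ bottom block, so I must verify that the top rows of $Z_e$ (rows $1{:}n$) either vanish in the imposed structure or are likewise absorbed. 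If they are not, I would restrict $Z$ so that its $(1,2)$ block is zero, or equivalently interpret the replacement rule ``$Z_eL\mapsto W$'' blockwise with $L$ recovered from the bottom block. Under this reading, \eqn{iqc_observer_lmi} becomes exactly $\mathcal{Q}+G\mathcal{B}+\mathcal{B}\T G\T\prec 0$. Here $G$ is the structured slack \eqn{structured_slack} and $\mathcal{B}=[I,\,-A_{\text{aug}},\,-B_{\text{aug}}]$. The matrix $\mathcal{Q}$ has blocks $0$ at $(1,1)$, $P$ at $(1,2)$, $Q_{22}$ at $(2,2)$, $Q_{23}$ at $(2,3)$ and $Q_{33}$ at $(3,3)$. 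This can be confirmed by expanding $G\mathcal{B}$ block by block: for example, the $(1,2)$ block is $-ZA_{\text{aug}}+aZ\T$ and the $(2,2)$ block is $-a\He{ZA_{\text{aug}}}$.

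Next I apply the easy direction of Lemma~\ref{lem:finsler} (2$\Rightarrow$1), which needs no structure on $G$. It gives $\nu\T\mathcal{Q}\nu<0$ for all nonzero $\nu\in\ker\mathcal{B}$, that is, whenever $\dot\xi = A_{\text{aug}}\xi+B_{\text{aug}}\eta$. Now $\nu\T\mathcal{Q}\nu = 2\dot\xi\T P\xi + \zeta\T\Pi_\Lambda\zeta - \gamma^2\|w\|^2 - p\T\Lambda p$ with $\zeta = C_{\text{aug}}\xi+D_{\text{aug}}\eta$. Along trajectories this is precisely the left side of \eqn{iqc_dissipation}, with $\zeta\T\Pi_\Lambda\zeta = \|\tilde z\|^2 + q\T\Lambda q$. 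By strictness and homogeneity there is $\epsilon>0$ with $\dot V + \|\tilde z\|^2 - (\gamma^2-\epsilon)\|w\|^2 + [q;p]\T M(\Lambda)[q;p]\le 0$.

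Finally, I integrate over $[0,T]$ from zero initial conditions. Using $V\ge 0$ and the hard IQC \eqn{iqc_time} gives $\|\tilde z_T\|_2^2\le(\gamma^2-\epsilon)\|w_T\|_2^2$ for all $T$, so the induced gain is strictly below $\gamma$. Well-posedness (A2) ensures the interconnection signals are in $\Ell_{2e}$. Internal stability follows from the $(2,2)$ restriction with $\eta=0$, which yields $\He{PA_{\text{aug}}}\prec 0$ (equivalently, Lyapunov decrease with the IQC term nonnegative). With $P\succ 0$, this gives $\Ell_2$ signals, so $\norm{T_{w\to\tilde z}(\Delta)}_\infty<\gamma$. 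The identity $\|T\|_\infty = $ induced $\Ell_2$ gain for the stable LTI case, or for general causal $\Delta$ the operator-gain interpretation, completes (b).
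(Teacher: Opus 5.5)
Your proposal is correct and follows the paper's own argument. Part (a) comes from the negativity of $\He{Z_{e,\text{bot}}}$. For part (b), at $L=Z_{e,\text{bot}}^{-1}W$ the LMI \eqn{iqc_observer_lmi} is exactly $\mathcal{Q}+G\mathcal{B}+\mathcal{B}\T G\T\prec 0$ at the true $(A_{\text{aug}},B_{\text{aug}})$; you then use the easy direction of Finsler's lemma and integrate \eqn{iqc_dissipation} against the hard IQC, and you are more careful than the paper about strictness and stability.

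The bookkeeping point you flag is a real imprecision in the paper. It writes $Z_e(A-LC_y)=Z_eA-WC_y$ although $Z_e$ has $2n$ rows, and similarly $Z_{3e}\in\R^{n_\eta\times n}$ can only be a fixed left multiple of $Z_{e,\text{bot}}$, not literally $\beta Z_{e,\text{bot}}$. Only your first fix works: the top block of $Z_e$ must be zero, or a fixed left multiple of $Z_{e,\text{bot}}$. The ``blockwise'' reading with an independent variable for the top block is not equivalent, because it would reintroduce exactly the relaxation gap the single substitution is meant to avoid.
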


\begin{proof}
\textit{Part~(a).} $\He{Z_{e,\text{bot}}} \prec 0$ implies $Z_{e,\text{bot}}v \neq 0$ for all $v \neq 0$, so $Z_{e,\text{bot}}$ is nonsingular.

\textit{Part~(b).} The dissipation~\eqn{iqc_dissipation} is $\nu\T\mathcal{Q}\,\nu \le 0$ on $\ker\mathcal{B}$, where $\nu = [\dot{\xi}\T,\,\xi\T,\,\eta\T]\T$, $\mathcal{B} = [I,\,-A_{\text{aug}},\,-B_{\text{aug}}]$, and
\begin{equation*}
    \mathcal{Q} = \begin{bmatrix} 0 & P & 0 \\ P & Q_{22} & Q_{23} \\ 0 & Q_{23}\T & Q_{33} \end{bmatrix}.
\end{equation*}
By Lemma~\ref{lem:finsler}, this holds if and only if $\mathcal{Q} + G\mathcal{B} + \mathcal{B}\T G\T \prec 0$ for some $G$. Substituting $G = [Z;\,aZ;\,Z_3]$ and expanding gives \eqn{iqc_observer_lmi}. The observer gain $L$ enters $A_{\text{aug}}$ and $B_{\text{aug}}$ only through columns $n{+}1{:}2n$ (the error block). With $W = Z_{e,\text{bot}}L$ as in \eqn{single_substitution}, all products involving $L$ become linear. At the recovered $L = Z_{e,\text{bot}}^{-1}W$, these substitutions hold exactly, so \eqn{iqc_observer_lmi} is satisfied at the true $(A_{\text{aug}},B_{\text{aug}})$. The $\Hinf$ bound follows from integrating the dissipation with $\xi(0) = 0$, $V(\xi(T)) \ge 0$, and the hard IQC~\eqn{iqc_time}.
\end{proof}

Since $L = Z_{e,\text{bot}}^{-1}W$, rather than the traditional $P^{-1}W$, the Lyapunov matrix $P$ is decoupled from the gain recovery. This property relaxes the $\Lambda$--$P$ coupling that causes infeasibility in Lemma~\ref{lem:obstruction}. 

Robust stability follows from (A4) and the strict LMI. Integrating \eqn{iqc_dissipation} with $w=0$ guarantees the exponential decay of $V(\xi)$.

The parameters $a > 0$ and $\beta > 0$ control the slack structure; the SDP is solved for each $(a,\beta)$ on a coarse grid and the pair minimizing $\gamma$ is selected. In Section~\ref{sec:applications}, $a = 1$ and $\beta = 0.01$ are used.

\begin{remark}[Connection to the IQC theorem]
The Megretski--Rantzer IQC theorem~\cite{megretski1997system} requires the nominal interconnection ($\Delta = 0$) to be stable. This implies that $A_{\text{aug}}|_{\Delta=0} = \blkdiag(A,\,A-LC_y)$ must be Hurwitz, which follows directly from (A4) and gain recovery. Since we use hard IQCs with static multipliers and a quadratic storage function, the homotopy condition is automatically satisfied~\cite{pfifer2015iqc}.
\end{remark}

\begin{remark}[IQC certificate validity]\label{rem:iqc_gap}
Theorem~\ref{thm:main_theorem} certifies $\gamma$ for all $\Delta$ satisfying the hard IQC~\eqn{iqc_time}. The certificate is valid over $\mathcal{D}$ only if $\Lambda \succeq \Delta\T\Lambda\Delta$ holds for all $\Delta \in \mathcal{D}$, which requires $\Lambda = \blkdiag(\Lambda_1,\ldots,\Lambda_N)$ matching the block structure of $\Delta$. In general, a full-block multiplier $\Lambda \in \mathbb{S}^{n_q}_{++}$ violates this condition and can produce an artificially low $\gamma$ smaller than the actual worst-case.
\end{remark}

\subsection{Block-Diagonal Specialization}\label{sec:blkdiag}

With $P = \blkdiag(P_{11},P_{22})$ and $Y := P_{22}L$, the null-space form of Lemma~\ref{lem:finsler} (condition~1) eliminates the slack $G$ entirely. This yields a simpler SDP but requires $\He{P_{11}A} \prec 0$.
\begin{corollary}[Block-diagonal specialization]\label{cor:blkdiag}
Let $P = \blkdiag(P_{11},P_{22})$ and $Y := P_{22}L$. If $P$, $Y$, $\Lambda_i \succ 0$, and $\gamma > 0$ satisfy the LMI
\begin{equation}
    \begin{bmatrix}
        \He{P A_{\text{aug}}} + Q_{22} & P B_{\text{aug}} + Q_{23} \\
        \star & Q_{33}
    \end{bmatrix} \prec 0,
    \elab{blkdiag_lmi}
\end{equation}
where the substitution $Y = P_{22}L$ linearizes the terms $P A_{\text{aug}} = \blkdiag(P_{11}A,\, P_{22}A - YC_y)$ and $PB_\text{aug}$, then $L = P_{22}^{-1}Y$ achieves $\norm{T_{w\to\tilde{z}}(\Delta)}_\infty < \gamma$ for all $\Delta$ satisfying the hard IQC~\eqn{iqc_time}. Since $Q_{22} \succeq 0$, feasibility requires $\He{P_{11}A} \prec 0$.
\end{corollary}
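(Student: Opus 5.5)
The plan is to treat Corollary~\ref{cor:blkdiag} as the classical (non-Finsler) IQC-extended bounded real lemma evaluated at the block-diagonal $P$. There are three tasks: verify that the substitution $Y=P_{22}L$ linearizes the inequality exactly, read off the dissipation inequality~\eqn{iqc_dissipation} from~\eqn{blkdiag_lmi}, and obtain the necessity statement $\He{P_{11}A}\prec 0$ directly from Lemma~\ref{lem:obstruction}.

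\emph{Exact linearization.} With $P=\blkdiag(P_{11},P_{22})$ and $A_{\text{aug}}=\blkdiag(A,A-LC_y)$, we have $PA_{\text{aug}}=\blkdiag(P_{11}A,\,P_{22}A-YC_y)$. Likewise,
\[
PB_{\text{aug}}=\begin{bmatrix} P_{11}B_p & P_{11}B_w\\ P_{22}B_p-YD_{yp} & P_{22}B_w-YD_{yw}\end{bmatrix}.
\]
Both expressions are affine in $(P_{11},P_{22},Y)$. The blocks $Q_{22},Q_{23},Q_{33}$ from~\eqn{Q33} are affine in $\Lambda$ and $\gamma^2$, and $\gamma^2$ can be treated as the decision variable. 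Hence~\eqn{blkdiag_lmi} is an LMI. Since $P_{22}\succ 0$ is invertible, $L=P_{22}^{-1}Y$ gives back exactly $P_{22}L=Y$. The LMI therefore holds at the true $(A_{\text{aug}},B_{\text{aug}})$ with no relaxation gap.

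\emph{Performance certificate.} Take the quadratic form of~\eqn{blkdiag_lmi} on $[\xi\T,\eta\T]\T$. Along trajectories of~\eqn{augmented_dynamics}--\eqn{augmented_outputs}, the terms $\He{PA_{\text{aug}}}$ and $PB_{\text{aug}}$ give $\dot V$ for $V=\xi\T P\xi$. Using the definitions of $\Pi_\Lambda$ and the selection matrices, the $Q$-blocks give
\[
\tilde z\T\tilde z+q\T\Lambda q-p\T\Lambda p-\gamma^2 w\T w .
\]
Strictness of the inequality yields~\eqn{iqc_dissipation} with a margin $-\epsilon\|[\xi;\eta]\|^2$. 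Integrate over $[0,T]$ from $\xi(0)=0$, drop $V(\xi(T))\ge 0$, and use the hard IQC~\eqn{iqc_time}, which makes $\int_0^T (q\T\Lambda q-p\T\Lambda p)\,dt\ge 0$. This gives $\|(\tilde z)_T\|_2^2\le(\gamma^2-\epsilon)\|(w)_T\|_2^2$. Letting $T\to\infty$ gives a gain strictly below $\gamma$.

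Stability and well-posedness are handled as in the remarks after Theorem~\ref{thm:main_theorem}. The $(1,1)$ block of~\eqn{blkdiag_lmi} gives $\He{PA_{\text{aug}}}\prec -Q_{22}\preceq 0$, so $A_{\text{aug}}$ is Hurwitz. Together with (A2), (A4) and the hard-IQC/homotopy argument, this yields robust stability. This is the main point to be careful about: the inequality $\gamma^2$ bounding the $\Ell_2$ gain requires the truncation argument, and the signals must lie in $\Ell_{2e}$ for $\Delta$ causal and bounded. I would cite~\cite{Seiler2015,pfifer2015iqc} rather than reprove that step.

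\emph{Necessity of $\He{P_{11}A}\prec 0$.} Inequality~\eqn{blkdiag_lmi} is literally~\eqn{brl_iqc} under the same block-diagonal $P$ and $\Lambda\succeq 0$. Lemma~\ref{lem:obstruction} therefore gives $\He{P_{11}A}+C_q\T\Lambda C_q\prec 0$. Alternatively, take the leading principal $n\times n$ submatrix of a negative definite matrix; it must be negative definite. Since $Q_{22}\succeq 0$, this yields $\He{P_{11}A}\prec 0$.
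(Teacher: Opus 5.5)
Your proposal is correct and takes essentially the paper's route: the paper obtains \eqn{blkdiag_lmi} as the slack-free, null-space form (condition~1 of Lemma~\ref{lem:finsler}) of the dissipation inequality~\eqn{iqc_dissipation}, which is exactly your direct evaluation along trajectories with $Y=P_{22}L$ recovered exactly. It likewise gets the necessity of $\He{P_{11}A}\prec 0$ from Lemma~\ref{lem:obstruction} via $Q_{22}\succeq 0$, where your leading-principal-submatrix argument is a cleaner justification than the Schur-complement phrasing in the paper's proof of that lemma.
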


This specialization can become infeasible for wide uncertainty ranges due to the $\Lambda$--$P$ conservatism trade-off. Remark~\ref{rem:lambda_tradeoff} in Section~\ref{sec:applications} quantifies this trade-off for the considered numerical example.

\section{Numerical Example}\label{sec:applications}
The numerical example considers a mass-spring-damper (MCK) system. The system is defined by the state space representation
\begin{align}\label{eq:msd}
    \dot{x} &= \begin{bmatrix} 0 & 1 \\ -\frac{k}{m} & -\frac{c}{m} \end{bmatrix}\bmtx x_1 \\x_2 \emtx  + \begin{bmatrix} 0 \\ \frac{1}{m} \end{bmatrix} w\\
    y &= \begin{bmatrix} -\frac{k}{m}  & -\frac{c}{m}\end{bmatrix}\bmtx x_1 \\ x_2\emtx + \frac{1}{m} w,
\end{align}
where $x_1$ is the displacement, $x_2$ is the velocity and $y$ the measured acceleration.
In~\eqref{eq:msd}, $m$ denotes the mass with nominal value $m_0 = 1$. The damping $c$ has a nominal value $c_0=0.5$. The spring constant $k$ has a nominal value $k_0$ of $2$. All system parameters are assumed uncertain with uncertainty ranges $m \in [0.8,\,1.2]$, $c \in [0.3,\,0.8]$, $k \in [1.5,\,2.6]$.
The system is Hurwitz at nominal parameters ($\zeta = 0.18$). The LFT yields three scalar uncertainty blocks ($\delta_m$, $\delta_c$, $\delta_k$) with $n_p = n_q = 5$ uncertainty channels and $\|C_q\| = 0.86$.

\tab{mck_results} compares formulation--multiplier combinations. Corollary~\ref{cor:blkdiag} is infeasible with both scalar and block-diagonal $\Lambda$ due to the $\Lambda$--$P$ trade-off (Remark~\ref{rem:lambda_tradeoff}). The structured slack of Theorem~\ref{thm:main_theorem} restores feasibility and achieves $\gamma = 0.838$ with scalar $\Lambda$ and $\gamma = 0.704$ with block-diagonal $\Lambda = \blkdiag(\Lambda_1,\ldots,\Lambda_N)$, $\Lambda_i \in \mathbb{S}^{n_i}_{++}$. The block-diagonal structure is the richest multiplier satisfying $\Lambda \succeq \Delta\T\Lambda\Delta$ over $\mathcal{D}$ (Remark~\ref{rem:iqc_gap}). 

\begin{table}[htbp]
\centering
\caption{MCK observer synthesis ($n_p = n_q = 5$)}
\tlab{mck_results}
\begin{tabular}{lcl}
\toprule
\textbf{Formulation} & $\gamma$ & \textbf{Multiplier} \\
\midrule
Nominal (no unc.) & 0.500 & --- \\
Corollary~\ref{cor:blkdiag} (scalar $\Lambda$) & infeasible & $\Lambda = \blkdiag(\lambda_i I)$ \\
Corollary~\ref{cor:blkdiag} (blkdiag $\Lambda$) & infeasible & $\Lambda = \blkdiag(\Lambda_i)$ \\
Theorem~\ref{thm:main_theorem} (scalar $\Lambda$) & 0.838 & $\Lambda = \blkdiag(\lambda_i I)$ \\
Theorem~\ref{thm:main_theorem} (blkdiag $\Lambda$) & 0.704 & $\Lambda = \blkdiag(\Lambda_i)$ \\
\bottomrule
\end{tabular}
\end{table}

\fig{mck_error} shows Monte Carlo error norm trajectories over 50 random parameter realizations. \fig{mck_certificate} validates the certificates by computing $\|T_{w \to \tilde{z}}\|_\infty$ over a dense parameter grid ($27{,}000$ samples).

\begin{figure}[htbp]
    \centering
    \includegraphics[width=0.95\columnwidth]{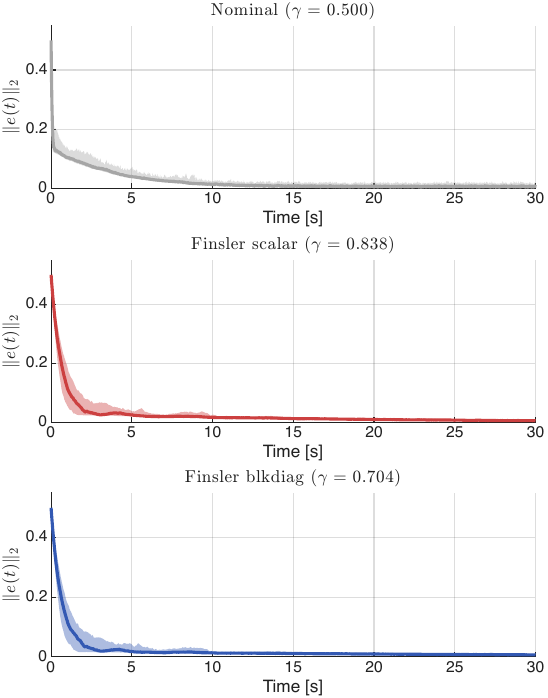}
    \caption{MCK: $\|e(t)\|_2$ over 50 random parameter realizations (shaded: 5--95\%, solid: median). Each subplot shows one formulation with its certified $\gamma$.}
    \flab{mck_error}
\end{figure}

\begin{figure}[htbp]
    \centering
    \includegraphics[width=0.95\columnwidth]{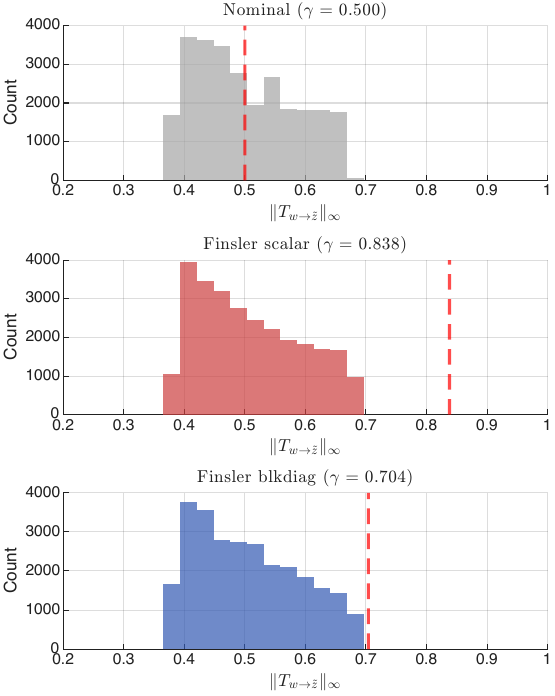}
    \caption{MCK certificate validation: $\Hinf$ norm distribution over a dense parameter grid ($27{,}000$ samples). Dashed red lines show certified $\gamma$. Both Finsler certificates (scalar and block-diagonal $\Lambda$) are valid over $\mathcal{D}$.}
    \flab{mck_certificate}
\end{figure}

\subsubsection*{Discussion}
The nominal design's worst-case $\Hinf$ norm ($0.67$, \fig{mck_certificate}) exceeds its certified $\gamma = 0.50$. Both Finsler certificates are valid over $\mathcal{D}$: $\gamma = 0.838$ with scalar $\Lambda$ and $\gamma = 0.704$ with block-diagonal $\Lambda$. The block-diagonal multiplier provides a tighter bound by cross-weighting the repeated uncertainty channels ($n_3 = 3$ for the $k$-block). The block diagonal Lyapunov specialization remains infeasible for both multiplier structures.

\begin{remark}[Multiplier--Lyapunov trade-off]\label{rem:lambda_tradeoff}
The block-diagonal LMI requires $\Lambda$ large enough for $Q_{33} \prec 0$ yet small enough that $C_q\T \Lambda C_q$ does not overwhelm $\He{P_{11}A}$. Large uncertainty magnitudes produce large $\|C_q\|$ rendering these requirements incompatible. In the considered MCK example ($\|C_q\| = 0.86$), the block-diagonal specialization with scalar $\Lambda$ is infeasible despite a nominal stability margin of $\min_i|{\rm Re}(\lambda_i)| = 0.25$. The structured Finsler slack relaxes this coupling and yields significantly less conservative results.
\end{remark}
\section{Conclusions}

The block-diagonal Lyapunov structure for IQC-based robust observer synthesis requires $\He{P_{11}A} \prec 0$, a condition tightened by the multiplier~$\Lambda$ (Lemma~\ref{lem:obstruction}). A structured Finsler slack (Theorem~\ref{thm:main_theorem}) avoids this coupling. On a mass-spring-damper with 20--50\% parametric uncertainty, the Finsler formulation is feasible ($\gamma = 0.704$) where the block-diagonal approach is not. The LMI has size $2n_\xi{+}n_\eta$ and is solved in under one second. Future work will consider the extension to general dynamic IQC multipliers. Here, standard congruences for filter synthesis introduce a bilinear coupling between the Lyapunov auxiliary and the IQC multiplier that the observer's substitution $Y = P_{22}L$ does not encounter.


\section{Acknowledgments}
The first author acknowledges the support by the AFOSR grant FA9550-22-1-0539 with Dr. Erik Blasch as the program director. The second author acknowledges funding by the European Union under Grant No. 101153910. Views and opinions expressed are those of the authors and do not necessarily reflect those of AFOSR or the European Union.

\bibliographystyle{IEEEtran}
\bibliography{references}

\end{document}